\documentclass[a4paper,11pt]{amsart}
\usepackage[utf8x]{inputenc}
\usepackage{amsthm,amsfonts,amsmath,amssymb,enumerate}
\usepackage{verbatim} 
\usepackage{rotating} 

\usepackage{hyperref}
\usepackage{pdfsync}
\synctex=1

\usepackage{multirow}

\renewcommand{\L}{\mathcal{L}}

\newcommand{\C}{\mathbb{C}}
\newcommand{\g}{\mathfrak{g}}

\newcommand{\h}{\mathfrak{h}}

\renewcommand{\deg}{\operatorname{deg}}
\renewcommand{\O}{\mathcal{O}}

\newtheorem{theorem}{Theorem}[section]

\theoremstyle{definition}

\theoremstyle{remark}
\newtheorem{remark}[theorem]{Remark}
\numberwithin{equation}{section}

\begin{document}

\title[Filiforms as degenerations]{Filiform Lie algebras of dimension 8 \\ as degenerations}
\author{Joan Felipe Herrera-Granada and  Paulo Tirao}
\address{CIEM-FaMAF, Universidad Nacional de C\'ordoba, Argentina}
\date{July 29, 2013}
\subjclass[2010]{Primary 17B30; Secondary 17B99}
\keywords{Filiform Lie algebras, Vergne's conjecture, Grunewald-O'Halloran conjecture, degenerations, deformations.}

\maketitle

\begin{abstract}
For each complex 8-dimensional filiform Lie algebra we find another non isomorphic Lie algebra that degenerates to it.
Since this is already known for nilpotent Lie algebras of rank $\ge 1$, 
only the caracteristically nilpotent ones should be considered.
\end{abstract}

\section{Introduction}

In this short paper we show that each complex 8-dimensional filiform Lie algebra is the degeneration of another non isomorphic Lie algebra.
This adds more evidence supporting the Grunewald-O'Halloran conjecture,
which states that every nilpotent Lie algebra is the degeneration of another non isomorphic Lie algebra.
This conjecture is stronger than Vergne's conjecture which states that there are no rigid nilpotent Lie algebras in the variety of all Lie algebras.

In the previous paper \cite{HGT} we proved that all nilpotent Lie algebras of rank $\ge 1$ are the degeneration of another
non isomorphic Lie algebra, living open the Grunewald-O'Halloran conjecture for caracteristically nilpotent Lie algebras.
In \cite{HGT} we considered also all 7-dimensional caracteristically nilpotent Lie algebras proving 
the conjeture for all 7-dimensional complex nilpotent Lie algebras.

We now consider the filiform Lie algebras of dimension 8, which have been classified in \cite{ABG}.
According to this classification, there are four families parametrized by a complex parameter 
and six isolate filiform algebras all of which are caracteristically nilpotent.
For each of these we construct, following \cite{GO}, a non trivial linear deformation that corresponds to a degeneration.
We note that it is not known which ones of the linear deformations constructed in \cite{GO} does correspond to a degeneration and which ones does not. 
We exhibit an example of such a deformation which does not correspond to a degeneration.

The case of filiform Lie algebras is of particular interest.
Degeneration is transitive and under degeneration the nilpotency degree does not grow.
Hence, the nilpotent Lie algebras of maximal nilpotency class, the filiforms,
are very high on the diagram of degenerations.
Filiforms may degenerate to any other nilpotent Lie algebra, but only a filiform may degenerate to a given filiform.

This paper is very much related to our paper \cite{HGT}.
We keep definitions and notations from it and we include here only some basic preliminaries, to make this paper more easy to read.

\section{Preliminaries}

Let $\L_n$ be the algebraic variety of complex Lie algebras of dimension $n$,
together with the action of the group $GL_n=GL_n(\C)$ by `change of basis',
and denote the orbit of $\mu$ in $\L_n$ by $\O(\mu)$. 

A Lie algebra $\mu$ degenerates to a Lie algebra $\lambda$, $\mu \rightarrow_{\deg} \lambda$, if $\lambda\in\overline{\O(\mu)}$,
the Zariski closure of $\O(\mu)$. 

A linear deformation of a Lie algebra $\mu$ is, for the aim of this paper, a family $\mu_t$, $t\in \C^\times$,
of Lie algebras such that
\[ \mu_t=\mu + t\phi, \]
where $\phi$ is a Lie algebra bracket which in addition is a 2-cocycle of $\mu$.

If a linear deformation $\mu_t$ of $\mu$ is such that $\mu_t\in\O(\mu_1)$ for all $t\in\C^\times$, 
then $\mu_1\rightarrow_{\deg}\mu$.
In fact, for each $t\in\C^\times$ there exist $g_t\in GL_n$ such that $g_t^{-1}\cdot \mu_1=\mu_t$,
then $\lim_{t\mapsto 0}g_t^{-1}\cdot \mu_1=\lim_{t\mapsto 0}\mu_t=\mu$.
Then, in order to show that $\mu_1\rightarrow_{\deg}\mu$, we only need to prove that 
for each $t\in\C^\times$ there exist $g_t\in GL_n$ such that
\begin{equation}\label{eqn:degeneration}
 \mu_1(g_t(x),g_t(y)))=g_t(\mu_t(x,y)), \quad\text{for all $x,y\in\C^n$}.
\end{equation}

Let $(\g,\mu)$ be a given Lie algebra of dimension $n$ and let $\h$ be a codimension 1 ideal of $\g$ with a semisimple derivation $D$.
For any element $X$ of $\g$ outside $\h$, $\g=\langle X \rangle \oplus \h$.
The bilinear form $\mu_D$ on $\g$ defined by $\mu_D(X,z)= D(z)$ and $\mu_D(y,z)=0$, for $y,z\in\h$, is a 2-cocycle for $\mu$
and a Lie bracket.
Hence,
\begin{equation}\label{eqn:linear-deformation}
 \mu_t=\mu + t\mu_D,
\end{equation}
is a linear deformation of $\mu$.
If $\g$ is nilpotent, then $\mu_t$ is always solvable but not nilpotent. 
In particular, $\mu_t$ is not isomorphic to $\mu$ for all $t\in\C^\times$.
The construction described was given in \cite{GO}.

\section{8-dimensional filiforms}

Complex filiform Lie algebras of dimension 8 have been classified in \cite{ABG}.
This classification is presented as a list of 6 families index by a parameter $\alpha\in\C$
\[ 
 \mu_8^{6,\alpha}, \quad \mu_8^{7,\alpha}, \quad \mu_8^{9,\alpha}, \quad \mu_8^{10,\alpha}, \quad
 \mu_8^{13,\alpha}, \quad \mu_8^{14,\alpha},
\]
and 14 isolated algebras
\[
\begin{gathered}
 \mu_8^{1}, \quad \mu_8^{2}, \quad \mu_8^{3}, \quad \mu_8^{4}, \quad \mu_8^{5}, \quad \mu_8^{8}, \quad \mu_8^{11}, \\
 \mu_8^{12}, \quad \mu_8^{15}, \quad \mu_8^{16}, \quad \mu_8^{17}, \quad \mu_8^{18}, \quad \mu_8^{19}, \quad \mu_8^{20}.
\end{gathered}
\]

The following table shows those algebras with a semisimple derivation. 
We keep the name and the bases from \cite{ABG}.

\begin{center}
\begin{tabular}{|c|c|c|c|}\hline
\scriptsize{$\mu$} & \scriptsize{$D\in Der(\mu)$} & \scriptsize{$\mu$} & \scriptsize{$D\in Der(\mu)$}\\ \hline 
\rule[-1.5cm]{0cm}{3.2cm}\scriptsize{$\mu_{8}^{3}$} & \scriptsize{$\left(\begin{smallmatrix}
4 &  &  &  &  &  &  &  \\
 & 13 &  &  &  &  &  &  \\
 &  & 9 &  &  & 3 &  &  \\
 &  &  & 8 &  &  & 6 &  \\
 &  &  &  & 7 &  &  & 6 \\
 &  &  &  &  & 6 &  &  \\
 &  &  &  &  &  & 5 &  \\
3 &  &  &  &  &  &  & 1 \\
\end{smallmatrix}\right)$} & \scriptsize{$\mu_{8}^{14,\alpha}$} & \scriptsize{$\left(\begin{smallmatrix}
1 &  &  &  &  &  &  &   \\
 & 9 &  &  &  &  &  &   \\
 &  & 8 &  &  &  &  &   \\
 &  &  & 7 &  &  &  &   \\
 &  &  &  & 6 &  &  &   \\
 &  &  &  &  & 5 &  &   \\
 &  &  &  &  &  & 4 &   \\
 &  &  &  &  &  &  & 3  \\
\end{smallmatrix}\right)$}\\ \hline 
\rule[-1.6cm]{0cm}{3.4cm}\scriptsize{$\mu_{8}^{4}$} & \scriptsize{$\left(\begin{smallmatrix}
3 &  &  &  &  &  &  &   \\
 & 11 &  & -2 &  &  &  &   \\
 &  & 8 &  & -2 &  &  &   \\
 &  &  & 7 &  &  &  &   \\
 &  &  &  & 6 &  & 2 &   \\
 &  &  &  &  & 5 &  & 2  \\
 &  &  &  &  &  & 4 &   \\
2 &  &  &  &  &  &  & 1  \\
\end{smallmatrix}\right)$} & \scriptsize{$\mu_{8}^{16}$} & \scriptsize{$\left(\begin{smallmatrix}
3 &  &  &  &  &  &  &   \\
 & 27 &  &  &  &  &  &   \\
 &  & 24 &  & 2 &  & 2 &   \\
 &  &  & 21 &  & 4 &  & 2  \\
 &  &  &  & 18 &  & 6 &   \\
 &  &  &  &  & 15 &  & 6  \\
 &  &  &  &  &  & 12 &   \\
2 &  &  &  &  &  &  & 9  \\
\end{smallmatrix}\right)$}\\ \hline
\rule[-1.3cm]{0cm}{2.8cm}\scriptsize{$\mu_{8}^{5}$} & \scriptsize{$\left(\begin{smallmatrix}
1 &  &  &  &  &  &  &   \\
 & 7 &  &  &  &  &  &   \\
 &  & 6 &  &  &  &  &   \\
 &  &  & 5 &  &  &  &   \\
 &  &  &  & 4 &  &  &   \\
 &  &  &  &  & 3 &  &   \\
 &  &  &  &  &  & 2 &   \\
 &  &  &  &  &  &  & 1  \\
\end{smallmatrix}\right)$} & \scriptsize{$\mu_{8}^{18}$} & \scriptsize{$\left(\begin{smallmatrix}
1 &  &  &  &  &  &  &   \\
 & 10 &  &  &  &  &  &   \\
 &  & 9 &  &  &  &  &   \\
 &  &  & 8 &  &  &  &   \\
 &  &  &  & 7 &  &  &   \\
 &  &  &  &  & 6 &  &   \\
 &  &  &  &  &  & 5 &   \\
 &  &  &  &  &  &  & 4  \\
\end{smallmatrix}\right)$}\\ \hline
\rule[-1.7cm]{0cm}{3.6cm}\scriptsize{$\mu_{8}^{7,\alpha}$} & \scriptsize{$\left(\begin{smallmatrix}
1 &  &  &  &  &  &  &   \\
 & 8 &  &  &  &  &  &   \\
 &  & 7 &  &  &  &  &   \\
 &  &  & 6 &  &  &  &   \\
 &  &  &  & 5 &  &  &   \\
 &  &  &  &  & 4 &  &   \\
 &  &  &  &  &  & 3 &   \\
 &  &  &  &  &  &  & 2  \\
\end{smallmatrix}\right)$} & \scriptsize{$\mu_{8}^{19}$} & \scriptsize{$\left(\begin{smallmatrix}
1 &  &  &  &  &  &  &   \\
 & 11 &  &  &  &  &  &   \\
 &  & 10 &  &  &  &  &   \\
 &  &  & 9 &  &  &  &   \\
 &  &  &  & 8 &  &  &   \\
 &  &  &  &  & 7 &  &   \\
 &  &  &  &  &  & 6 &   \\
 &  &  &  &  &  &  & 5  \\
\end{smallmatrix}\right)$}\\ \hline
\rule[-1.5cm]{0cm}{3.2cm}\scriptsize{$\mu_{8}^{12}$} & \scriptsize{$\left(\begin{smallmatrix}
1 &  &  &  &  &  &  &   \\
 & 8 &  &  &  &  &  &   \\
 &  & 7 &  &  &  &  &   \\
 &  &  & 6 &  &  &  &   \\
 &  &  &  & 5 &  &  &   \\
 &  &  &  &  & 4 &  &   \\
 &  &  &  &  &  & 3 &   \\
 &  &  &  &  &  &  & 2  \\
\end{smallmatrix}\right)$} & \scriptsize{$\mu_{8}^{20}$} & \scriptsize{$\left(\begin{smallmatrix}
1 &  &  &  &  &  &  &   \\
 & 0 &  &  &  &  &  &   \\
 &  & -1 &  &  &  &  &   \\
 &  &  & -2 &  &  &  &   \\
 &  &  &  & -3 &  &  &   \\
 &  &  &  &  & -4 &  &   \\
 &  &  &  &  &  & -5 &   \\
 &  &  &  &  &  &  & -6  \\
\end{smallmatrix}\right)$}\\ \hline
\end{tabular}
\end{center}

The remaining algebras do not have any semisimple derivation, they are all caracteristically nilpotent, that is, their derivation algebras are nilpotent as Lie algebras.
These are
\begin{equation}\label{eqn:caracteristically-nilpotent}
 \begin{gathered}
 \mu_8^{6,\alpha}, \quad \mu_8^{9,\alpha}, \quad \mu_8^{10,\alpha}, \quad \mu_8^{13,\alpha}, \\
 \mu_8^{1}, \quad \mu_8^{2}, \quad \mu_8^{8}, \quad \mu_8^{11}, \quad \mu_8^{15}, \quad \mu_8^{17}.
\end{gathered}
\end{equation}

\section{Degenerations to filiforms}

We now show that for every complex filiform Lie algebra of dimension 8 there is another Lie algebra non isomorphic to it,
actually solvable non nilpotent, that degenerates to it.
That solvable algebra is constructed as a linear deformation of the original.
This is the main result of this paper.

\begin{theorem}
 Every 8-dimensional complex filiform Lie algebras is the degeneration of a solvable Lie algebra.
\end{theorem}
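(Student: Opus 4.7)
The plan is to split the algebras to be considered into two disjoint groups according to the classification of Section~3. The first group consists of the filiforms appearing in the table, all of which admit a non-zero semisimple derivation and therefore have rank $\ge 1$; for these the main theorem of \cite{HGT} already furnishes a non-isomorphic Lie algebra degenerating to each of them, and no new work is needed. The remaining cases are the ten characteristically nilpotent filiforms listed in \eqref{eqn:caracteristically-nilpotent}: the four parametric families $\mu_8^{6,\alpha}$, $\mu_8^{9,\alpha}$, $\mu_8^{10,\alpha}$, $\mu_8^{13,\alpha}$, and the six isolated algebras $\mu_8^{1}$, $\mu_8^{2}$, $\mu_8^{8}$, $\mu_8^{11}$, $\mu_8^{15}$, $\mu_8^{17}$.

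For each of these ten algebras I would apply the Grunewald--O'Halloran construction recalled in Section~2. First I would fix a codimension-one ideal $\h\subset\mu$ which, despite $\mu$ being characteristically nilpotent, does carry a non-zero semisimple derivation $D$; for a filiform in the basis inherited from \cite{ABG} a natural candidate is $\h=\langle e_2,\dots,e_8\rangle$, and the existence of a suitable $D$ is only a condition on $\h$, not on $\mu$. The recipe \eqref{eqn:linear-deformation} then produces the family $\mu_t=\mu+t\mu_D$ with $\mu_D(e_1,z)=D(z)$ for $z\in\h$ and $\mu_D(y,z)=0$ for $y,z\in\h$. Since $\mu$ is nilpotent and each $\mu_t$ with $t\ne 0$ is solvable but not nilpotent, $\mu_1$ is automatically non-isomorphic to $\mu$.

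The crux is to show that this linear deformation is actually a degeneration, which by the reduction after \eqref{eqn:degeneration} means producing, for each $t\in\C^\times$, an element $g_t\in GL_8$ such that
\[
\mu_1(g_t(x),g_t(y))=g_t(\mu_t(x,y)).
\]
I would look for $g_t$ in weighted-diagonal form $\operatorname{diag}(t^{a_1},\dots,t^{a_8})$ (composed, if necessary, with a unipotent correction), reading off the exponents $a_i$ from the homogeneity constraints imposed by the structure constants of $\mu$ and of $\mu_D$. Once such a family is exhibited, letting $t\to 0$ in the identity $\mu_t=g_t^{-1}\cdot\mu_1$ yields $\mu_1\rightarrow_{\deg}\mu$ and finishes that case.

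The hard part is precisely this last verification: the introduction itself warns that not every deformation built from \cite{GO} is a degeneration, so the pair $(\h,D)$ must be chosen judiciously and then matched with explicit exponents $a_i$. I expect the bulk of the work to be a case-by-case analysis of the ten algebras in \eqref{eqn:caracteristically-nilpotent}. The parametric families $\mu_8^{6,\alpha}$, $\mu_8^{9,\alpha}$, $\mu_8^{10,\alpha}$, $\mu_8^{13,\alpha}$ are the likeliest source of difficulty, since the change of basis $g_t$ and possibly the ideal $\h$ must be arranged to work uniformly in $\alpha\in\C$, and one may have to treat a finite set of exceptional values of $\alpha$ separately.
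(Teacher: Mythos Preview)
Your proposal is correct and follows essentially the same strategy as the paper: reduce to the characteristically nilpotent list \eqref{eqn:caracteristically-nilpotent} via \cite{HGT}, then for each such $\mu$ pick a codimension-one ideal with a semisimple derivation, form the Grunewald--O'Halloran deformation \eqref{eqn:linear-deformation}, and exhibit an explicit $g_t$ verifying \eqref{eqn:degeneration}. The paper carries this out exactly as you anticipate---$g_t$ is weighted-diagonal plus a lower-triangular polynomial correction, and the parametric families are handled uniformly in $\alpha$ with no exceptional values---though in several cases (notably $\mu_8^1$, $\mu_8^2$, $\mu_8^8$) a substantial preliminary change of basis is required, and the complement of $\h$ is often $X_8$ rather than $X_1$ in the \cite{ABG} basis.
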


\begin{proof}
The result was already stablished in \cite{HGT} for nilpotent Lie algebras of rank $\ge 1$, that is, admiting a semisimple derivation.
Hence, we only need to consider the caracteristically nilpotent ones, listed in \eqref{eqn:caracteristically-nilpotent}.

We proceed on a case by case basis.
For each one of the characteristically nilpotent filiforms $\mu$, we choose an ideal $\h$ of codimension 1 togheter with a semisimple derivation $D$ of it,
and consider the corresponding linear deformation $\mu_t$ given by \eqref{eqn:linear-deformation}.
We then show that it corresponds to a degeneration,
by showing a family of linear isomorphisms $g_t\in GL(8,\C)$ satisfying \eqref{eqn:degeneration}.
The tables below, one for each algebra or family of algebras considered, contain the choosen ideal, the semisimple derivation,
and the family $g_t$ realizing the degeneration.

For the ease of presentation in all cases we choose a new basis $\{Y_1,\dots,Y_8\}$ of $\C^8$, that we present in terms of
the basis $\{X_1,\dots,X_8\}$ used in \cite{ABG}.
We denote by $\mu_i$ and by $\mu_{i,\alpha}$ respectively the brackets $\mu_8^i$ and $\mu_8^{i,\alpha}$ in this new basis.
In all cases, the ideal of codimension 1 we choose is $\h=\langle Y_2,\dots,Y_8 \rangle$.
The derivation $D$ choosen and the family of linear isomorphisms $g_t$ are written in the latest basis.
With strightforward computations one can verify that $g_t$ satisfies \eqref{eqn:degeneration}.

\begin{center} \tiny
 \begin{tabular}{|c|c|} \hline
   ${\{Y_1,\dots,Y_8\}}$ & $\mu_{1}$ \\ \hline
   
   $\begin{aligned}\ \newline\\  Y_1 &= X_1 \\ Y_2 &= \tfrac{2}{5}X_6-X_7+X_8 \\ Y_3 &= \tfrac{51}{5000}X_3+\tfrac{1}{250}X_4+\tfrac{17}{50}X_5-X_6+X_7 \\ 
        Y_4 &= -\tfrac{44}{125}X_3+\tfrac{77}{50}X_4-\tfrac{44}{5}X_5+11X_6 \\ Y_5 &= -\tfrac{33}{500}X_3+\tfrac{33}{50}X_4-\tfrac{11}{10}X_5 \\ 
        Y_6 &= -\tfrac{11}{50}X_4 \\ Y_7 &= \tfrac{11}{1000}X_3 \\ Y_8 &= X_2  \\ \ \newline
     \end{aligned}$
   & $\begin{gathered}\ 
       [Y_1,Y_2] = Y_3-\tfrac{3}{55}Y_5-\tfrac{8}{55}Y_6-\tfrac{69}{55}Y_7, \\ 
       [Y_1,Y_3] = \tfrac{1}{11}Y_4+\tfrac{2}{11}Y_5-\tfrac{4}{11}Y_6+\tfrac{48}{11}Y_7+\tfrac{51}{5000}Y_8, \\ 
       [Y_1,Y_4] = -10Y_5+10Y_6+80Y_7-\tfrac{44}{125}Y_8 \\
       [Y_1,Y_5] = 5Y_6+60Y_7-\tfrac{33}{500}Y_8,\quad [Y_1,Y_6] = -20Y_7, \\
       [Y_1,Y_7] = \tfrac{11}{1000}Y_8, \quad [Y_2,Y_3] = -\tfrac{1}{11}Y_4, \\
       [Y_2,Y_4] = 10Y_5, \quad [Y_2,Y_5] = -5Y_6, \\
       [Y_2,Y_6] = 20Y_7, \quad [Y_3,Y_4] = 400Y_7, \\
       [Y_3,Y_6] = \tfrac{11}{50}Y_8, \quad [Y_4,Y_5] = -\tfrac{121}{10}Y_8
     \end{gathered}$ \\ \hline
     
     \tiny{$D=\left(\begin{smallmatrix}
1 &   &   &   &   &   &   \\
  & 3 &   &   &   &   &   \\
  &   & 4 &   &   &   &   \\
  &   &   & 5 &   &   &   \\
  &   &   &   & 6 &   &   \\
  &   &   &   &   & 7 &   \\
  &   &   &   &   &   & 9 \\
\end{smallmatrix}\right)$}  &  \rule[-1.5cm]{0cm}{3.2cm}\tiny{$g_{t}=\left(\begin{smallmatrix} 
t &  &  &  &  &  &  &  \\[0.1cm]
0 & t &  &  &  &  &  &  \\[0.1cm]
p_{1} & p_{2} & t^{3} &  &  &  &  &  \\[0.1cm]
p_{3} & 0 & 0 & t^{4} &  &  &  &  \\[0.1cm]
p_{4} & 0 & p_{5} & 0 & t^{5} &  &  &  \\[0.1cm]
p_{6} & 0 & p_{7} & p_{8} & 0 & t^{6} &  &  \\[0.1cm]
0 & p_{9} & p_{10} & p_{11} & p_{12} & 0 & t^{7} &  \\[0.1cm]
0 & 0 & p_{13} & p_{14} & p_{15} & 0 & p_{16} & t^{9}\\
\end{smallmatrix}\right)$}  \\ \hline
   \multicolumn{2}{|l|}{$\begin{gathered}\ \newline\\
    p_{1}=-\tfrac{1}{2}t(t-1),\quad p_{2}=\tfrac{1}{2}t(t-1), \quad p_{3}=\tfrac{1}{550}t(3t^{3}-5t^{2}+10t-8), \\
    p_{4}=-\tfrac{2}{825}t(12t^{4}+10t^{3}-25t+3), \quad p_{5}=\tfrac{1}{11}t^{3}(t-1), \\ 
    p_{6}=\tfrac{1}{1100}t(39t^{5}-45t^{4}-15t^{3}-10t^{2}+65t-49), \quad p_{7}=-\tfrac{4}{33}t^{3}(t^{2}-1),\\
    p_{8}=5t^{4}(t-1),\quad p_{9}=-\tfrac{1}{22}t(2t^{5}-t^{4}+4t^{3}-16t^{2}+32t-20), \quad 
    p_{10}=\tfrac{1}{11}t^{3}(18t^{3}+5t^{2}-10t-13),\\
    p_{11}=\tfrac{80}{3}t^{4}(t^{2}-1),\quad p_{12}=30t^{5}(t-1), \quad 
    p_{13}=\tfrac{1}{2000}t^{3}(6t^{5}+7t^{4}-17t^{3}+t^{2}+t+1), \\
    p_{14}=\tfrac{11}{75}t^{4}(t^{3}-t^{2}-t+1),\quad p_{15}=\tfrac{11}{200}t^{5}(t^{2}-2t+1), \quad
    p_{16}=\tfrac{11}{2000}t^{7}(t-1)\\ \ \newline
    \end{gathered}$} \\ \hline
 \end{tabular}
\end{center}

\begin{center} \tiny
 \begin{tabular}{|c|c|} \hline
   ${\{Y_1,\dots,Y_8\}}$ & $\mu_{2}$ \\ \hline
   
   $\begin{aligned}\ \newline\\  Y_1 &= X_1 \\ Y_2 &= \tfrac{1}{2}X_3-\tfrac{3}{8}X_5+X_8 \\
    Y_3 &= X_2-\tfrac{1}{2}X_4+\tfrac{1}{8}X_6 \\ 
    Y_4 &= -\tfrac{3}{2}X_3+\tfrac{3}{4}X_5 \\ Y_5 &= \tfrac{1}{2}X_4-\tfrac{1}{4}X_6 \\ 
    Y_6 &= -\tfrac{3}{8}X_5 \\ Y_7 &= \tfrac{1}{8}X_6 \\ Y_8 &= X_7  \\ \ \newline
     \end{aligned}$
   & $\begin{gathered}\ 
       [Y_1,Y_2] = -Y_3-2Y_5-4Y_7-\tfrac{1}{2}Y_8, \\ 
       [Y_1,Y_3] = \tfrac{2}{3}Y_4+\tfrac{8}{3}Y_6-8Y_7, \\ 
       [Y_1,Y_4] = 3Y_5+12Y_7+\tfrac{3}{2}Y_8, \\
       [Y_1,Y_5] = \tfrac{4}{3}Y_6, \quad [Y_1,Y_6] = 3Y_7, \\
       [Y_2,Y_3] = -\tfrac{2}{3}Y_4, \quad [Y_2,Y_4] = -3Y_5, \\
       [Y_2,Y_5] = -\tfrac{4}{3}Y_6, \quad [Y_2,Y_6] = -3Y_7, \\
       [Y_2,Y_7] = \tfrac{1}{8}Y_8, \quad [Y_3,Y_6] = \tfrac{3}{8}Y_8, \\
       [Y_4,Y_5] = -\tfrac{3}{4}Y_8
     \end{gathered}$ \\ \hline
     
     \tiny{$D=\left(\begin{smallmatrix}
1 &   &   &   &   &   &   \\
  & 2 &   &   &   &   &   \\
  &   & 3 &   &   &   &   \\
  &   &   & 4 &   &   &   \\
  &   &   &   & 5 &   &   \\
  &   &   &   &   & 6 &   \\
  &   &   &   &   &   & 7 \\
\end{smallmatrix}\right)$}  &  \rule[-1.5cm]{0cm}{3.2cm}\tiny{$g_{t}=\left(\begin{smallmatrix} 
t &  &  &  &  &  &  &  \\[0.1cm]
0 & t &  &  &  &  &  &  \\[0.1cm]
0 & 0 & t^{2} &  &  &  &  &  \\[0.1cm]
p_{1} & 0 & 0 & t^{3} &  &  &  &  \\[0.1cm]
p_{2} & 0 & 0 & 0 & t^{4} &  &  &  \\[0.1cm]
0 & 0 & p_{3} & 0 & 0 & t^{5} &  &  \\[0.1cm]
p_{4} & p_{5} & p_{6} & p_{7} & 0 & 0 & t^{6} &  \\[0.1cm]
0 & 0 & p_{8} & p_{9} & p_{10} & 0 & 0 & t^{7}\\
\end{smallmatrix}\right)$}  \\ \hline
   \multicolumn{2}{|l|}{$\begin{gathered}\ \newline\\
    p_{1}=-\tfrac{2}{3}t(t^{2}-1),\quad p_{2}=-\tfrac{2}{3}t(t^{2}-1), \quad p_{3}=\tfrac{8}{9}t^{2}(t^{2}-1), \\
    p_{4}=\tfrac{2}{15}t(30t^{5}-20t^{4}+3t^{3}+20t^{2}-33),\quad p_{5}=-\tfrac{2}{5}t(2t^{4}-t^{3}-1), \quad 
    p_{6}=-2t^{2}(t^{3}-1), \\  
    p_{7}=4t^{3}(t^{2}-1), \quad p_{8}=\tfrac{1}{20}t^{2}(t^{3}-1),\quad p_{9}=\tfrac{3}{8}t^{3}(t^{3}-1), \quad 
    p_{10}=-\tfrac{1}{6}t^{4}(t^{2}-1),\\ \ \newline
    \end{gathered}$} \\ \hline
 \end{tabular}
\end{center}

\begin{center} \tiny
 \begin{tabular}{|c|c|} \hline
   ${\{Y_1,\dots,Y_8\}}$ & $\mu_{6,\alpha}$ \\ \hline
   
   $\begin{aligned}\ \newline\\  Y_1 &= X_8 \\ Y_2 &= X_1 \\  Y_3 &= X_7 \\  Y_4 &= X_6 \\ 
   Y_5 &= X_5 \\ Y_6 &= X_4 \\ Y_7 &= X_3 \\ Y_8 &= X_2  \\ \ \newline
     \end{aligned}$
   & $\begin{gathered}\ 
       [Y_1,Y_2] = -Y_3, \quad [Y_1,Y_3] = -(2+\alpha)Y_5-Y_6, \\ 
       [Y_1,Y_4] = -(2+\alpha)Y_6-Y_7, \quad [Y_1,Y_5] = -(1+\alpha)Y_7-Y_8, \\ 
       [Y_1,Y_6] = -\alpha Y_8, \quad [Y_2,Y_3] = Y_4, \\
       [Y_2,Y_4] = Y_5, \quad [Y_2,Y_5] = Y_6, \\
       [Y_2,Y_6] = Y_7, \quad [Y_2,Y_7] = Y_8, \\
       [Y_3,Y_4] = -Y_7, \quad [Y_3,Y_5] = -Y_8
     \end{gathered}$ \\ \hline
     
     \tiny{$D=\left(\begin{smallmatrix}
1 &   &   &   &   &   &   \\
  & 3 &   &   &   &   &   \\
  &   & 4 &   &   &   &   \\
  &   &   & 5 &   &   &   \\
  &   &   &   & 6 &   &   \\
  &   &   &   &   & 7 &   \\
  &   &   &   &   &   & 8 \\
\end{smallmatrix}\right)$}  &  \rule[-1.5cm]{0cm}{3.2cm}\tiny{$g_{t}=\left(\begin{smallmatrix} 
t &  &  &  &  &  &  &  \\[0.1cm]
0 & t &  &  &  &  &  &  \\[0.1cm]
0 & p_{1} & t^{3} &  &  &  &  &  \\[0.1cm]
0 & 0 & 0 & t^{4} &  &  &  &  \\[0.1cm]
0 & p_{2} & p_{3} & 0 & t^{5} &  &  &  \\[0.1cm]
p_{4} & p_{5} & p_{6} & p_{7} & 0 & t^{6} &  &  \\[0.1cm]
p_{8} & 0 & p_{9} & p_{10} & p_{11} & 0 & t^{7} &  \\[0.1cm]
0 & 0 & p_{12} & p_{13} & p_{14} & p_{15} & 0 & t^{8}\\
\end{smallmatrix}\right)$}  \\ \hline
   \multicolumn{2}{|l|}{$\begin{gathered}\ \newline\\
    p_{1}=-\tfrac{1}{2}t(t-1),\quad p_{2}=\tfrac{1}{8}t(2+\alpha)(t^{2}-2t+1), \quad p_{3}=-\tfrac{1}{2}t^{3}(2+\alpha)(t-1),
    \\
    p_{4}=\tfrac{1}{8}t(2+\alpha)(1+\alpha)(t^{3}-3t^{2}+3t-1),\quad
    p_{5}=\tfrac{1}{30}t(2t^{3}-5t+3),\quad  p_{6}=-\tfrac{1}{3}t^{3}(t^{2}-1), \\
    p_{7}=-\tfrac{1}{2}t^{4}(2+\alpha)(t-1),\quad
    p_{8}=\tfrac{1}{120}t(t-1)(16t^{3}+20t^{3}\alpha+16t^{2}-8t^{2}\alpha-54t-43t\alpha+30+27\alpha), \\
    p_{9}=\tfrac{1}{8}t^{3}(2+\alpha)(1+\alpha)(t^{2}-2t+1),\quad
    p_{10}=-\tfrac{1}{3}t^{4}(t^{2}-1),\quad p_{11}=-\tfrac{1}{2}t^{5}(1+\alpha)(t-1),\\
    p_{12}=\tfrac{1}{30}t^{3}(t-1)(4t^{2}+5t^{2}\alpha+4t-6-5\alpha),\quad
    p_{13}=\tfrac{1}{8}t^{4}\alpha(2+\alpha)(t^{2}-2t+1),\\
    p_{14}=-\tfrac{1}{3}t^{5}(t^{2}-1),\quad p_{15}=-\tfrac{1}{2}t^{6}\alpha(t-1) \\ \ \newline
    \end{gathered}$} \\ \hline
 \end{tabular}
\end{center}

\begin{center} \tiny
 \begin{tabular}{|c|c|} \hline
   ${\{Y_1,\dots,Y_8\}}$ & $\mu_{9,\alpha}$ \\ \hline
   
   $\begin{aligned}\ \newline\\  Y_1 &= X_8 \\ Y_2 &= X_1 \\  Y_3 &= X_7 \\  Y_4 &= X_6 \\ 
   Y_5 &= X_5 \\ Y_6 &= X_4 \\ Y_7 &= X_3 \\ Y_8 &= X_2  \\ \ \newline
     \end{aligned}$
   & $\begin{gathered}\ 
       [Y_1,Y_2] = -Y_3, \quad [Y_1,Y_3] = -Y_5-Y_6-\alpha Y_7, \\ 
       [Y_1,Y_4] = -Y_6-Y_7-\alpha Y_8, \quad [Y_1,Y_5] = -Y_7, \\ 
       [Y_1,Y_6] = -Y_8, \quad [Y_2,Y_3] = Y_4, \\
       [Y_2,Y_4] = Y_5, \quad [Y_2,Y_5] = Y_6, \\
       [Y_2,Y_6] = Y_7, \quad [Y_2,Y_7] = Y_8, \\
       [Y_3,Y_4] = -Y_8
     \end{gathered}$ \\ \hline
     
     \tiny{$D=\left(\begin{smallmatrix}
1 &   &   &   &   &   &   \\
  & 4 &   &   &   &   &   \\
  &   & 5 &   &   &   &   \\
  &   &   & 6 &   &   &   \\
  &   &   &   & 7 &   &   \\
  &   &   &   &   & 8 &   \\
  &   &   &   &   &   & 9 \\
\end{smallmatrix}\right)$}  &  \rule[-1.5cm]{0cm}{3.2cm}\tiny{$g_{t}=\left(\begin{smallmatrix} 
t &  &  &  &  &  &  &  \\[0.1cm]
0 & t &  &  &  &  &  &  \\[0.1cm]
0 & p_{1} & t^{4} &  &  &  &  &  \\[0.1cm]
p_{2} & 0 & 0 & t^{5} &  &  &  &  \\[0.1cm]
p_{3} & 0 & p_{4} & 0 & t^{6} &  &  &  \\[0.1cm]
p_{5} & p_{6} & p_{7} & p_{8} & 0 & t^{7} &  &  \\[0.1cm]
0 & 0 & p_{9} & p_{10} & p_{11} & 0 & t^{8} &  \\[0.1cm]
0 & 0 & p_{12} & p_{13} & 0 & p_{14} & 0 & t^{9}\\
\end{smallmatrix}\right)$}  \\ \hline
   \multicolumn{2}{|l|}{$\begin{gathered}\ \newline\\
    p_{1}=-\tfrac{1}{3}t(t^{2}-1),\quad p_{2}=-\tfrac{1}{6}t(t-1)(3t^{2}-2t-2), \quad
    p_{3}=\tfrac{1}{5}t(t-1)(11t^{4}-9t^{3}-14t^{2}+5t+5),\\
    p_{4}=-\tfrac{1}{2}t^{4}(t-1),\quad
    p_{5}=-\tfrac{1}{24}t(t-1)(6t^{4}\alpha-3t^{3}+6t^{3}\alpha+3t^{2}+6t^{2}\alpha-8t\alpha-8\alpha), \\
    p_{6}=\tfrac{1}{90}t(t-1)(33t^{4}-22t^{3}-37t^{2}+10t+10),\quad p_{7}=-\tfrac{1}{3}t^{4}(t^{2}-1), \quad
    p_{8}=-\tfrac{1}{2}t^{5}(t-1), \\
    p_{9}=-\tfrac{1}{8}t^{4}(t-1)(2t^{2}\alpha-t+2t\alpha+1+2\alpha),\quad
    p_{10}=-\tfrac{1}{3}t^{5}(t^{2}-1),\quad p_{11}=-\tfrac{1}{2}t^{6}(t-1),\\
    p_{12}=\tfrac{1}{15}t^{4}(t-1)(3t^{2}-2t-2),\quad
    p_{13}=-\tfrac{1}{8}t^{5}(t-1)(2t^{2}\alpha-t+2t\alpha+1+2\alpha),
    p_{14}=-\tfrac{1}{2}t^{7}(t-1) \\ \ \newline
    \end{gathered}$} \\ \hline
 \end{tabular}
\end{center}

\begin{center} \tiny
 \begin{tabular}{|c|c|} \hline
   ${\{Y_1,\dots,Y_8\}}$ & $\mu_{10,\alpha}$ \\ \hline
   
   $\begin{aligned}\ \newline\\  Y_1 &= X_8 \\ Y_2 &= X_1 \\  Y_3 &= X_7 \\  Y_4 &= X_6 \\ 
   Y_5 &= X_5 \\ Y_6 &= X_4 \\ Y_7 &= X_3 \\ Y_8 &= X_2  \\ \ \newline
     \end{aligned}$
   & $\begin{gathered}\ 
       [Y_1,Y_2] = -Y_3, \quad [Y_1,Y_3] = -Y_5-Y_7-\alpha Y_8, \\ 
       [Y_1,Y_4] = -Y_6-Y_8, \quad [Y_1,Y_5] = -Y_7, \\ 
       [Y_1,Y_6] = -Y_8, \quad [Y_2,Y_3] = Y_4, \\
       [Y_2,Y_4] = Y_5, \quad [Y_2,Y_5] = Y_6, \\
       [Y_2,Y_6] = Y_7, \quad [Y_2,Y_7] = Y_8
     \end{gathered}$ \\ \hline
     
     \tiny{$D=\left(\begin{smallmatrix}
1 &   &   &   &   &   &   \\
  & 2 &   &   &   &   &   \\
  &   & 3 &   &   &   &   \\
  &   &   & 4 &   &   &   \\
  &   &   &   & 5 &   &   \\
  &   &   &   &   & 6 &   \\
  &   &   &   &   &   & 7 \\
\end{smallmatrix}\right)$}  &  \rule[-1.5cm]{0cm}{3.2cm}\tiny{$g_{t}=\left(\begin{smallmatrix} 
t &  &  &  &  &  &  &  \\[0.1cm]
0 & t &  &  &  &  &  &  \\[0.1cm]
p_{1} & 0 & t^{2} &  &  &  &  &  \\[0.1cm]
p_{2} & p_{3} & 0 & t^{3} &  &  &  &  \\[0.1cm]
p_{4} & p{5} & p_{6} & 0 & t^{4} &  &  &  \\[0.1cm]
0 & 0 & 0 & p_{7} & 0 & t^{5} &  &  \\[0.1cm]
0 & 0 & p_{8} & 0 & p_{9} & 0 & t^{6} &  \\[0.1cm]
0 & 0 & p_{10} & p_{11} & 0 & p_{12} & 0 & t^{7}\\
\end{smallmatrix}\right)$}  \\ \hline
   \multicolumn{2}{|l|}{$\begin{gathered}\ \newline\\
    p_{1}=-\tfrac{2}{5}t\alpha(t^{4}-1),\quad p_{2}=-\tfrac{1}{8}t(6t^{3}-3t^{2}+10t-13), \quad
    p_{3}=-\tfrac{1}{5}t\alpha(t^{4}-1),\\
    p_{4}=\tfrac{1}{5}t\alpha(t^{4}-1),\quad p_{5}=-\tfrac{1}{8}t(2t^{3}-t^{2}+2t-3), \quad
    p_{6}=-\tfrac{1}{2}t^{2}(t-1),\\
    p_{7}=-\tfrac{1}{2}t^{3}(t-1), \quad p_{8}=-\tfrac{1}{8}t^{2}(2t^{3}-t^{2}+2t-3), \quad
    p_{9}=-\tfrac{1}{2}t^{4}(t-1),\\
    p_{10}=-\tfrac{1}{5}t^{2}\alpha(t^{4}-1),\quad p_{11}=-\tfrac{1}{8}t^{3}(2t^{3}-t^{2}+2t-3),\quad
    p_{12}=-\tfrac{1}{2}t^{5}(t-1) \\ \ \newline
    \end{gathered}$} \\ \hline
 \end{tabular}
\end{center}

\begin{center} \tiny
 \begin{tabular}{|c|c|} \hline
   ${\{Y_1,\dots,Y_8\}}$ & $\mu_{11}$ \\ \hline
   
   $\begin{aligned}\ \newline\\  Y_1 &= X_8 \\ Y_2 &= X_1 \\  Y_3 &= X_7 \\  Y_4 &= X_6 \\ 
   Y_5 &= X_5 \\ Y_6 &= X_4 \\ Y_7 &= X_3 \\ Y_8 &= X_2  \\ \ \newline
     \end{aligned}$
   & $\begin{gathered}\ 
       [Y_1,Y_2] = -Y_3, \quad [Y_1,Y_3] = -Y_5-Y_8, \\ 
       [Y_1,Y_4] = -Y_6, \quad [Y_1,Y_5] = -Y_7, \\ 
       [Y_1,Y_6] = -Y_8, \quad [Y_2,Y_3] = Y_4, \\
       [Y_2,Y_4] = Y_5, \quad [Y_2,Y_5] = Y_6, \\
       [Y_2,Y_6] = Y_7, \quad [Y_2,Y_7] = Y_8
     \end{gathered}$ \\ \hline
     
     \tiny{$D=\left(\begin{smallmatrix}
1 &   &   &   &   &   &   \\
  & 2 &   &   &   &   &   \\
  &   & 3 &   &   &   &   \\
  &   &   & 4 &   &   &   \\
  &   &   &   & 5 &   &   \\
  &   &   &   &   & 6 &   \\
  &   &   &   &   &   & 7 \\
\end{smallmatrix}\right)$}  &  \rule[-1.5cm]{0cm}{3.2cm}\tiny{$g_{t}=\left(\begin{smallmatrix} 
t &  &  &  &  &  &  &  \\[0.1cm]
0 & t &  &  &  &  &  &  \\[0.1cm]
p_{1} & 0 & t^{2} &  &  &  &  &  \\[0.1cm]
p_{2} & p_{3} & 0 & t^{3} &  &  &  &  \\[0.1cm]
0 & p_{4} & p_{5} & 0 & t^{4} &  &  &  \\[0.1cm]
0 & p_{6} & 0 & p_{7} & 0 & t^{5} &  &  \\[0.1cm]
0 & 0 & p_{8} & 0 & p_{9} & 0 & t^{6} &  \\[0.1cm]
0 & 0 & p_{10} & p_{11} & 0 & p_{12} & 0 & t^{7}\\
\end{smallmatrix}\right)$}  \\ \hline
   \multicolumn{2}{|l|}{$\begin{gathered}\ \newline\\
    p_{1}=-\tfrac{8}{5}t(t^{4}-1),\quad p_{2}=\tfrac{1}{8}t(3t^{2}-10t+7), \quad
    p_{3}=-\tfrac{4}{5}t(t^{4}-1),\\
    p_{4}=\tfrac{1}{8}t(t^{2}-2t+1),\quad p_{5}=-\tfrac{1}{2}t^{2}(t-1), \quad
    p_{6}=-\tfrac{1}{5}t(t^{4}-1),\\
    p_{7}=-\tfrac{1}{2}t^{3}(t-1), \quad p_{8}=\tfrac{1}{8}t^{2}(t^{2}-2t+1), \quad
    p_{9}=-\tfrac{1}{2}t^{4}(t-1),\\
    p_{10}=-\tfrac{1}{5}t^{2}(t^{4}-1),\quad p_{11}=\tfrac{1}{8}t^{3}(t^{2}-2t+1),\quad
    p_{12}=-\tfrac{1}{2}t^{5}(t-1) \\ \ \newline
    \end{gathered}$} \\ \hline
 \end{tabular}
\end{center}

\begin{center} \tiny
 \begin{tabular}{|c|c|} \hline
   ${\{Y_1,\dots,Y_8\}}$ & $\mu_{13,\alpha}$ \\ \hline
   
   $\begin{aligned}\ \newline\\  Y_1 &= X_8 \\ Y_2 &= X_1 \\  Y_3 &= X_7 \\  Y_4 &= X_6 \\ 
   Y_5 &= X_5 \\ Y_6 &= X_4 \\ Y_7 &= X_3 \\ Y_8 &= X_2  \\ \ \newline
     \end{aligned}$
   & $\begin{gathered}\ 
       [Y_1,Y_2] = -Y_3, \quad [Y_1,Y_3] = -(1+\alpha)Y_6-Y_7, \\ 
       [Y_1,Y_4] = -(1+\alpha)Y_7-Y_8, \quad [Y_1,Y_5] = -\alpha Y_8, \\ 
       [Y_2,Y_3] = Y_4, \quad [Y_2,Y_4] = Y_5, \\
       [Y_2,Y_5] = Y_6, \quad [Y_2,Y_6] = Y_7, \\
       [Y_2,Y_7] = Y_8, \quad [Y_3,Y_4] = -Y_8
     \end{gathered}$ \\ \hline
     
     \tiny{$D=\left(\begin{smallmatrix}
1 &   &   &   &   &   &   \\
  & 4 &   &   &   &   &   \\
  &   & 5 &   &   &   &   \\
  &   &   & 6 &   &   &   \\
  &   &   &   & 7 &   &   \\
  &   &   &   &   & 8 &   \\
  &   &   &   &   &   & 9 \\
\end{smallmatrix}\right)$}  &  \rule[-1.5cm]{0cm}{3.2cm}\tiny{$g_{t}=\left(\begin{smallmatrix} 
t &  &  &  &  &  &  &  \\[0.1cm]
0 & t &  &  &  &  &  &  \\[0.1cm]
0 & p_{1} & t^{4} &  &  &  &  &  \\[0.1cm]
0 & 0 & 0 & t^{5} &  &  &  &  \\[0.1cm]
p_{2} & 0 & 0 & 0 & t^{6} &  &  &  \\[0.1cm]
p_{3} & 0 & p_{4} & 0 & 0 & t^{7} &  &  \\[0.1cm]
0 & 0 & p_{5} & p_{6} & 0 & 0 & t^{8} &  \\[0.1cm]
0 & 0 & 0 & p_{7} & p_{8} & 0 & 0 & t^{9}\\
\end{smallmatrix}\right)$}  \\ \hline
   \multicolumn{2}{|l|}{$\begin{gathered}\ \newline\\
    p_{1}=-\tfrac{1}{3}t(t^{2}-1),\quad p_{2}=-\tfrac{1}{3}t(1+\alpha)(t^{4}-2t^{2}+1), \quad
    p_{3}=-\tfrac{1}{12}t(3t^{5}-7t^{2}+4),\\
    p_{4}=-\tfrac{1}{3}t^{4}(1+\alpha)(t^{2}-1),\quad p_{5}=-\tfrac{1}{4}t^{4}(t^{3}-1), \quad
    p_{6}=-\tfrac{1}{3}t^{5}(1+\alpha)(t^{2}-1),\\
    p_{7}=-\tfrac{1}{4}t^{5}(t^{3}-1), \quad p_{8}=-\tfrac{1}{3}t^{6}\alpha(t^{2}-1) \\ \ \newline
    \end{gathered}$} \\ \hline
 \end{tabular}
\end{center}

\begin{center} \tiny
 \begin{tabular}{|c|c|} \hline
   ${\{Y_1,\dots,Y_8\}}$ & $\mu_{15}$ \\ \hline
   
   $\begin{aligned}\ \newline\\  Y_1 &= X_8 \\ Y_2 &= X_1 \\  Y_3 &= X_7 \\  Y_4 &= X_6 \\ 
   Y_5 &= X_5 \\ Y_6 &= X_4 \\ Y_7 &= X_3 \\ Y_8 &= X_2  \\ \ \newline
     \end{aligned}$
   & $\begin{gathered}\ 
       [Y_1,Y_2] = -Y_3, \quad [Y_1,Y_3] = -Y_6-Y_7-Y_8, \\ 
       [Y_1,Y_4] = -Y_7-Y_8, \quad [Y_1,Y_5] = -Y_8, \\ 
       [Y_2,Y_3] = Y_4, \quad [Y_2,Y_4] = Y_5, \\
       [Y_2,Y_5] = Y_6, \quad [Y_2,Y_6] = Y_7, \\
       [Y_2,Y_7] = Y_8
     \end{gathered}$ \\ \hline
     
     \tiny{$D=\left(\begin{smallmatrix}
1 &   &   &   &   &   &   \\
  & 2 &   &   &   &   &   \\
  &   & 3 &   &   &   &   \\
  &   &   & 4 &   &   &   \\
  &   &   &   & 5 &   &   \\
  &   &   &   &   & 6 &   \\
  &   &   &   &   &   & 7 \\
\end{smallmatrix}\right)$}  &  \rule[-1.5cm]{0cm}{3.2cm}\tiny{$g_{t}=\left(\begin{smallmatrix} 
t &  &  &  &  &  &  &  \\[0.1cm]
0 & t &  &  &  &  &  &  \\[0.1cm]
p_{1} & 0 & t^{2} &  &  &  &  &  \\[0.1cm]
0 & p_{2} & 0 & t^{3} &  &  &  &  \\[0.1cm]
p_{3} & 0 & 0 & 0 & t^{4} &  &  &  \\[0.1cm]
p_{4} & 0 & p_{5} & 0 & 0 & t^{5} &  &  \\[0.1cm]
0 & 0 & p_{6} & p_{7} & 0 & 0 & t^{6} &  \\[0.1cm]
0 & 0 & p_{8} & p_{9} & p_{10} & 0 & 0 & t^{7}\\
\end{smallmatrix}\right)$}  \\ \hline
   \multicolumn{2}{|l|}{$\begin{gathered}\ \newline\\
    p_{1}=-\tfrac{2}{5}t(t^{4}-1),\quad p_{2}=-\tfrac{1}{5}t(t^{4}-1), \quad
    p_{3}=-\tfrac{1}{3}t(t^{2}-1),\\
    p_{4}=\tfrac{1}{20}t(4t^{4}-5t^{3}+1),\quad p_{5}=-\tfrac{1}{3}t^{2}(t^{2}-1), \quad
    p_{6}=-\tfrac{1}{4}t^{2}(t^{3}-1),\\
    p_{7}=-\tfrac{1}{3}t^{3}(t^{2}-1), \quad p_{8}=-\tfrac{1}{5}t^{2}(t^{4}-1),\\
    p_{9}=-\tfrac{1}{4}t^{3}(t^{3}-1), \quad p_{10}=-\tfrac{1}{3}t^{4}(t^{2}-1) \\ \ \newline
    \end{gathered}$} \\ \hline
 \end{tabular}
\end{center}

\begin{center} \tiny
 \begin{tabular}{|c|c|} \hline
   ${\{Y_1,\dots,Y_8\}}$ & $\mu_{17}$ \\ \hline
   
   $\begin{aligned}\ \newline\\  Y_1 &= X_8 \\ Y_2 &= X_1 \\  Y_3 &= X_7 \\  Y_4 &= X_6 \\ 
   Y_5 &= X_5 \\ Y_6 &= X_4 \\ Y_7 &= X_3 \\ Y_8 &= X_2  \\ \ \newline
     \end{aligned}$
   & $\begin{gathered}\ 
       [Y_1,Y_2] = -Y_3, \quad [Y_1,Y_3] = -Y_7-Y_8, \\ 
       [Y_1,Y_4] = -Y_8, \quad [Y_2,Y_3] = Y_4, \\ 
       [Y_2,Y_4] = Y_5, \quad [Y_2,Y_5] = Y_6, \\
       [Y_2,Y_6] = Y_7, \quad [Y_2,Y_7] = Y_8
     \end{gathered}$ \\ \hline
     
     \tiny{$D=\left(\begin{smallmatrix}
1 &   &   &   &   &   &   \\
  & 2 &   &   &   &   &   \\
  &   & 3 &   &   &   &   \\
  &   &   & 4 &   &   &   \\
  &   &   &   & 5 &   &   \\
  &   &   &   &   & 6 &   \\
  &   &   &   &   &   & 7 \\
\end{smallmatrix}\right)$}  &  \rule[-1.5cm]{0cm}{3.2cm}\tiny{$g_{t}=\left(\begin{smallmatrix} 
t &  &  &  &  &  &  &  \\[0.1cm]
0 & t &  &  &  &  &  &  \\[0.1cm]
p_{1} & 0 & t^{2} &  &  &  &  &  \\[0.1cm]
0 & p_{2} & 0 & t^{3} &  &  &  &  \\[0.1cm]
0 & 0 & 0 & 0 & t^{4} &  &  &  \\[0.1cm]
p_{3} & 0 & 0 & 0 & 0 & t^{5} &  &  \\[0.1cm]
0 & 0 & p_{4} & 0 & 0 & 0 & t^{6} &  \\[0.1cm]
0 & 0 & p_{5} & p_{6} & 0 & 0 & 0 & t^{7}\\
\end{smallmatrix}\right)$}  \\ \hline
   \multicolumn{2}{|l|}{$\begin{gathered}\ \newline\\
    p_{1}=-\tfrac{2}{5}t(t^{4}-1),\quad p_{2}=-\tfrac{1}{5}t(t^{4}-1), \quad
    p_{3}=-\tfrac{1}{4}t(t^{3}-1),\\
    p_{4}=-\tfrac{1}{4}t^{2}(t^{3}-1),\quad p_{5}=-\tfrac{1}{5}t^{2}(t^{4}-1), \quad
    p_{6}=-\tfrac{1}{4}t^{3}(t^{3}-1) \\ \ \newline
    \end{gathered}$} \\ \hline
 \end{tabular}
\end{center}

\begin{center} \tiny
 \begin{tabular}{|c|c|} \hline
   ${\{Y_1,\dots,Y_8\}}$ & $\mu_{8}$ \\ \hline
   
   $\begin{aligned}\ \newline\\  Y_1 &= X_1 \\ Y_2 &= -\tfrac{7}{10}X_5+\tfrac{7}{10}X_6 \\  
   Y_3 &= \tfrac{7}{60}X_{4}+\tfrac{7}{4}X_{5}-\tfrac{7}{5}X_{6}+\tfrac{7}{10}X_{7} \\  
   Y_4 &= -\tfrac{1}{3}X_{4}-\tfrac{2}{3}X_{5}-X_{7}+X_{8} \\ Y_5 &= X_3 \\ 
   Y_6 &= \tfrac{7}{30}X_{4} \\ Y_7 &= -\tfrac{7}{60}X_{4}+\tfrac{7}{60}X_{5} \\ Y_8 &= X_2  \\ \ \newline
     \end{aligned}$
   & $\begin{gathered}\ 
       [Y_1,Y_2] = 6Y_7, \quad [Y_1,Y_3] = Y_2+\tfrac{7}{60}Y_5+\tfrac{9}{2}Y_6-6Y_7, \\ 
       [Y_1,Y_4] = \tfrac{10}{7}Y_2+\tfrac{10}{7}Y_3-\tfrac{1}{3}Y_5-10Y_6-\tfrac{90}{7}Y_7, \\
       [Y_1,Y_5] = Y_8, \quad [Y_1,Y_6] = \tfrac{7}{30}Y_5, \\
       [Y_1,Y_7] = -\tfrac{7}{60}Y_5+\frac{1}{2}Y_6, \quad [Y_2,Y_3] = \tfrac{49}{100}Y_5, \\
       [Y_2,Y_4] = 3Y_6, \quad [Y_3,Y_4] = 6Y_7, \\ 
       [Y_3,Y_7] = -\tfrac{49}{600}Y_8, \quad [Y_4,Y_6] = \tfrac{7}{30}Y_8
     \end{gathered}$ \\ \hline
     
     \tiny{$D=\left(\begin{smallmatrix}
2 &   &   &   &   &   &   \\
  & 3 &   &   &   &   &   \\
  &   & 4 &   &   &   &   \\
  &   &   & 5 &   &   &   \\
  &   &   &   & 6 &   &   \\
  &   &   &   &   & 7 &   \\
  &   &   &   &   &   & 10 \\
\end{smallmatrix}\right)$}  &  \rule[-1.5cm]{0cm}{3.2cm}\tiny{$g_{t}=\left(\begin{smallmatrix} 
\tfrac{1}{t} &  &  &  &  &  &  &  \\[0.1cm]
0 & t^{2} &  & p_{1} &  &  &  &  \\[0.1cm]
0 & 0 & t^{3} &  &  &  &  &  \\[0.1cm]
0 & 0 & 0 & t^{4} &  &  &  &  \\[0.1cm]
0 & p_{2} & p_{3} & p_{4} & t^{5} &  & p_{5} &  \\[0.1cm]
0 & p_{6} & p_{7} & p_{8} & 0 & t^{6} &  &  \\[0.1cm]
0 & p_{9} & p_{10} & p_{11} & 0 & 0 & t^{7} &  \\[0.1cm]
0 & p_{12} & p_{13} & p_{14} & p_{15} & p_{16} & p_{17} & t^{10}\\
\end{smallmatrix}\right)$}  \\ \hline
   \multicolumn{2}{|l|}{$\begin{gathered}\ \newline\\
    p_{1}=-\tfrac{5}{7}t^{3}(t-1),\quad p_{2}=-\tfrac{7}{120}t^{2}(t^{6}-2t^{5}+1), \quad
    p_{3}=\tfrac{7}{120}t^{3}(2t^{5}-6t^{4}+t^{3}+3), \\
    p_{4}=\tfrac{1}{24}t^{3}(t^{7}+t^{6}-14t^{5}+12t^{4}-8t^{3}+7t+1),\quad p_{5}=-\tfrac{7}{120}t^{6}(t-1), \\
    p_{6}=-\tfrac{3}{20}t^{2}(t^{6}-1), \quad p_{7}=-\tfrac{1}{20}t^{3}(2t^{6}-5t^{5}-30t^{4}+33), \\
    p_{8}=-\tfrac{1}{28}t^{3}(3t^{7}-3t^{6}-60t^{5}+140t^{4}-83t+3), \quad p_{9}=\tfrac{6}{5}t^{2}(t^{6}-1), \\ 
    p_{10}=\tfrac{3}{10}t^{3}(t^{6}-5t^{5}+4),\quad p_{11}=\tfrac{1}{7}t^{3}(t^{7}-t^{6}-30t^{5}+24t+6), \\ 
    p_{12}=\tfrac{7}{4800}t^{2}(t^{12}-4t^{11}+14t^{6}-16t^{5}+5),\\
    p_{13}=\tfrac{1}{4800}t^{3}(t^{12}-12t^{11}+68t^{10}+16t^{9}+14t^{6}-168t^{5}+252t^{4}-56t^{3}-115),\\
    p_{14}=\tfrac{1}{20160}t^{3}(t^{13}-5t^{12}-44t^{11}-40t^{10}-224t^{9}-126t^{7}-210t^{6}+1344t^{5}-840t^{4}+1344t^{3}-1095t-105),\\
    p_{15}=\tfrac{1}{5}t^{5}(t^{6}-1), \quad p_{16}=\tfrac{7}{600}t^{6}(t^{6}-1), \quad
    p_{17}=\frac{7}{3600}t^{6}(t^{7}-4t^{6}+9t-6)     \\ \ \newline
    \end{gathered}$} \\ \hline
 \end{tabular}
\end{center}

\end{proof}

\begin{remark}
 In all cases the eigenvalues of $g_t$ on the subspace $\langle Y_2,\dots, Y_8\rangle$ are
 exactly $t^{d_2},t^{d_3},\dots,t^{d_8}$, where $d_2,d_3,\dots,d_8$ are the eigenvalues of the derivation $D$.
 This is evident in all cases where the matrix of $g_t$ is triangular.
 In the remaining case, the last one, it can be checked without difficulty.
\end{remark}

\begin{remark}
 Not all the linear deformations constructed as in \eqref{eqn:linear-deformation} correspond to a degeneration.
 Consider $\mu=\mu_8^{17}$ (see \cite{ABG}), let $\h=\langle X_1,\dots,X_7 \rangle$ and choose 
 \[
     D= \left( \begin{smallmatrix}
         0 \\
         & 1 \\
         & & 1 \\
         & & & 1 \\
         & & & & 1 \\
         & & & & & 1 \\
         & & & & & & 1 
        \end{smallmatrix} \right)  
 \]
 as semisimple derivation of $\h$.
 The linear deformation $\mu_t=\mu+t \mu_D$ does not correspond to a degeneration.
 That is, there is no family $g_t$ of linear isomorphisms satisfying \eqref{eqn:degeneration}.
\end{remark}

\noindent{\bf Acknowlegements.}
This paper is part of the PhD.\ thesis of the first author. 
He thanks CONICET for the Ph.D.\ fellowship awarded that made this possible.


\end{document}